\def\dofigures{11}

\documentclass[10pt]{article}

\def\print{1}
\def\myfmt{\print}%
\usepackage{calc}

\usepackage{amsmath}
\usepackage{amsxtra}
\usepackage{amssymb}
\usepackage{amsfonts}
\usepackage{amsthm}
\usepackage{amstext}
\usepackage{amsbsy}
\usepackage{amscd}
\usepackage[sans]{dsfont}

\usepackage{color}
\usepackage{bbding}

\newcommand{\defm}[1]{\emph{#1}}
\newcommand{\subeq}[2]{\mathord{\underbrace{\mathop{#1}}_{#2}}}

\newcommand{\sign}{\operatorname{sgn}}

\newcommand{\trace}{\operatorname{tr}}

\theoremstyle{plain}

\newtheorem{lemma}{Lemma}

\newtheorem{proposition}[lemma]{Proposition}
\theoremstyle{definition}

\newtheorem{definition}[lemma]{Definition}
\theoremstyle{remark}

\def\XXint#1#2#3{{\setbox0=\hbox{$#1{#2#3}{\int}$}
\vcenter{\hbox{$#2#3$}}\kern-.5\wd0}}

\newcommand{\vv}{\vec v}

\newcommand{\BV}{\operatorname{BV}}

\newcommand{\aint}{{-\kern-5.3mm\int}}

\newcommand{\topref}[2]{\overset{\text{\eqref{#1}}}{#2}}
\newcommand{\toprefb}[3]{\overset{\text{\myeqref{#1}}}{\underset{\text{\myeqref{#2}}}{#3}}}



\usepackage{calc}
\usepackage{color}
\usepackage{amsmath}
\usepackage{amssymb}
\usepackage{amsthm}
\usepackage[sans]{dsfont}

\usepackage{textcomp}
\usepackage{mathrsfs}

\usepackage{bbding}

\usepackage{mathtools}

\usepackage{fancyhdr}
\usepackage{supertabular}

\usepackage{verbatim}

\newcommand{\myeqref}[1]{\eqref{#1}}
\newcommand{\myref}[1]{\ref{#1}}
\newcommand{\mylabel}[1]{\label{#1}}
\newcommand{\myeqlabel}[1]{\label{#1}}

\errorcontextlines=5

\usepackage[dvips]{graphicx}

\definecolor{orange}{rgb}{1,0.5,0}
\definecolor{brown}{rgb}{0.5,0.25,0}
\definecolor{violet}{rgb}{0.25,0.0,0.65}
\definecolor{darkgreen}{rgb}{0,0.5,0}
\definecolor{darkred}{rgb}{0.65,0,0}
\definecolor{lightgrey}{rgb}{0.5,0.5,0.5}
\definecolor{midgreen}{rgb}{0,0.8,0}

\if\myfmt\print%

\newcommand{\uerr}[1]{}
\newcommand{\ustrat}[1]{}
\newcommand{\urem}[1]{}
\newcommand{\uimp}[1]{}
\else%

\newcommand{\uerr}[1]{{\color{red}(ERROR: #1)}}
\newcommand{\ustrat}[1]{{\color{darkgreen}(STRAT: #1)}}
\newcommand{\urem}[1]{{\color{green}{(REM: #1)}}}
\newcommand{\uimp}[1]{{\color{magenta}{(IMP: #1)}}}
\fi%

\if\print%
\newcommand{\myclearpage}{}
\else%
\newcommand{\myclearpage}{\clearpage}
\fi%

\newcommand{\vxi}{\vec\xi}
\newcommand{\vxr}{\vxi_r}
\newcommand{\vxs}{\vxi}
\newcommand{\vxt}{\vxi_T}
\newcommand{\vn}{\vec n}
\newcommand{\vt}{\vec t}

\newcommand{\vz}{\vec z}
\newcommand{\vI}{\vv_I}

\newcommand{\gv}{g_{\vv}}

\newcommand{\suso}{\Psi}

\def\pmnewlabel#1#2{\expandafter\def\csname pmref-#1\endcsname{#2}}
\def\pmref#1{\csname pmref-#1\endcsname}
\def\pmeqref#1{(\csname pmref-#1\endcsname)}

\if\myfmt\print\else%
\pagestyle{empty}
\thispagestyle{empty}
\fi%

\begin{document}

\if\myfmt\print\else%
\pagestyle{empty}
\thispagestyle{empty}
\fi%

\if\myfmt\print\else%
\tableofcontents%
\myclearpage%
\fi%

\title{Non-existence of strong regular reflections in self-similar potential flow\footnote{This material is based upon work supported by the
National Science Foundation under Grant No.\ NSF DMS-0907074}}
\author{Volker Elling}
\date{} 
\maketitle

\begin{abstract}
    We consider shock reflection which has a well-known local non-uniqueness: 
    the reflected shock can be either of two choices, called weak and strong. 
    We consider cases where existence of a global solution with weak reflected shock has been proven,
    for compressible potential flow. 
    If there was a global strong-shock solution as well, then potential flow would be ill-posed.
    However, we prove non-existence of strong-shock analogues in a natural class of candidates.
\end{abstract}

\if\myfmt\print\else%
\pagestyle{empty}
\thispagestyle{empty}
\fi%

\if\myfmt\print\else%
\parindent=0cm%
\parskip=\baselineskip%
\fi%

\section{Introduction}

\mylabel{section:refl}

In compressible inviscid flow, a shock wave is a surface across which pressure, temperature and normal velocity are discontinuous
(while tangential velocity is continuous). Shock waves form especially in supersonic or transonic flow near solid surfaces,
and even in the absence of boundaries in finite time from smooth initial data. 

Reflection of shock waves, by interaction with each other or with solid surfaces, is a classical problem of compressible flow. 
It has been studied extensively by Ernst Mach \cite{mach-wosyka,krehl-geest} and John von Neumann \cite{neumann-1943}, among others.

\if\dofigures%
\begin{figure}[h]
    \input{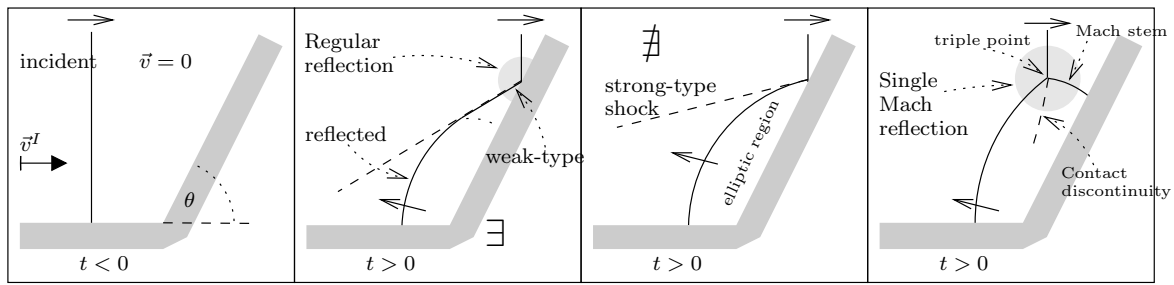}
    \caption{Center left: classical RR (known solution); center right: analogue with strong-type shock (we prove non-existence);
        right: single MR.}
    \mylabel{fig:classrr}
\end{figure}
\fi%

Recently there have been some breakthroughs in constructive existence proofs for particular shock reflections as
solutions of 2d compressible potential flow. 
First, \cite{chen-feldman-selfsim-journal} 
(see also \cite{canic-keyfitz-lieberman,yuxi-zheng-rref,elling-rrefl,bae-chen-feldman}) constructed a global solution
of a problem we call ``classical regular reflection'' here (see Figure \myref{fig:classrr} center left).
A shock wave (``incident'') approaches along a solid wall (Figure \myref{fig:classrr} left), reaching the wall corner at time $t=0$. 
For $t>0$ the incident shock continues along the ramp, while a curved shock (``reflected'') travels back upstream.
For some values of the parameters (wall angle $\theta$, upstream Mach number $|\vv_u|/c_u$), 
the two shocks meet in a single point (``reflection point'') on the ramp,
a local configuration called \defm{regular reflection} (\defm{RR}; Figure \myref{fig:classrr} center left and right). 
At other parameters \defm{Mach reflection} (\defm{MR}) is observed,
for  example \defm{single Mach reflection} (\defm{SMR}; Figure \myref{fig:classrr} right) 
where the two shocks meet in a \defm{triple point} away from the wall
with a contact discontinuity and a third shock (``Mach stem'') that connects to the wall (this pattern is not possible in
potential flow).

\if\dofigures%
\begin{figure}[h]
    \input{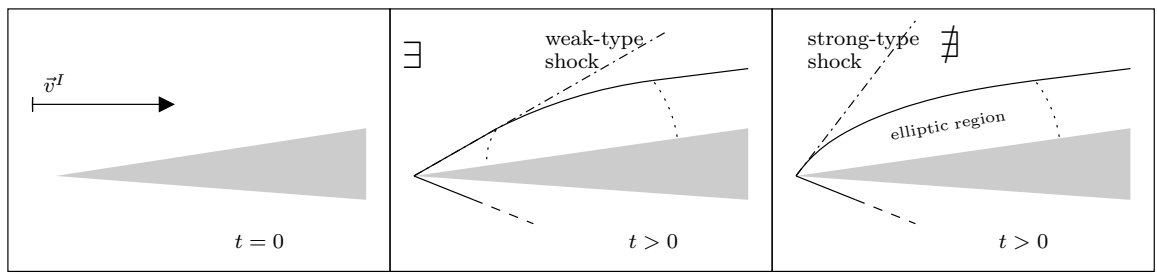}
    \caption{Left: constant supersonic velocity (initial data); center: weak-type solution (known solution); right: analogue
        with strong-type shock (we prove non-existence).}
    \mylabel{fig:superwedge}
\end{figure}
\fi%
In related work, Liu and the author \cite{elling-liu-pmeyer} considered supersonic flow onto a solid wedge 
(see Figure \myref{fig:superwedge}). At time $t=0$, the fluid state is the same in every point,
with sufficiently large supersonic velocity $\vv^I$ pointing in the right horizontal direction (Figure \myref{fig:superwedge} left).
For $t>0$, the fluid impinging on the wedge produces a shock wave (Figure \myref{fig:superwedge} center). 
Near infinity the shock wave is straight and parallel to the wedge 
(due to finite speed of sound it cannot ``see'' the differnence between a wedge and an infinite line wall); 
closer to the tip the shock curves and meets the wedge tip.

Both classical regular reflection and the supersonic wedge yield \defm{self-similar} flow: 
density, temperature and velocity are functions of the similarity coordinate $\vxi=\vec x/t$ alone. 
Patterns grow proportional to time $t$; the flow at time $t_2$ is obtained from the flow at time $t_1$ 
by a dilation by factor $t_2/t_1$.
$t\downarrow 0$ corresponds to ``zooming infinitely far away''
whereas $t\uparrow+\infty$ is like ``zooming into the origin'' or ``scaling up''. 

While \cite{chen-feldman-selfsim-journal,elling-rrefl} and \cite{elling-liu-pmeyer} prove existence of certain flows, 
as solutions of 2d compressible potential flow, for a range of wedge angles and upstream Mach numbers, 
they do \emph{not} prove uniqueness. The latter is an important question, due to a problematic feature of local RR:
take the point of reference of an observer located in the reflection point at all times.
Focus on a small neighbourhood of the reflection point (``local regular reflection''; Figure \myref{fig:locrr-left} left). 
The problem parameters already determine the angle between incident shock and wall as well as
the fluid state (velocity $\vv$, density, temperature) in the 1- and 2-sector.
The reflected shock is a steady shock passing through the reflection point, 
with 2-sector fluid state as upstream data, that results in a 3-sector velocity $\vv_3$ \emph{parallel} to the wall.

\begin{figure}[h]
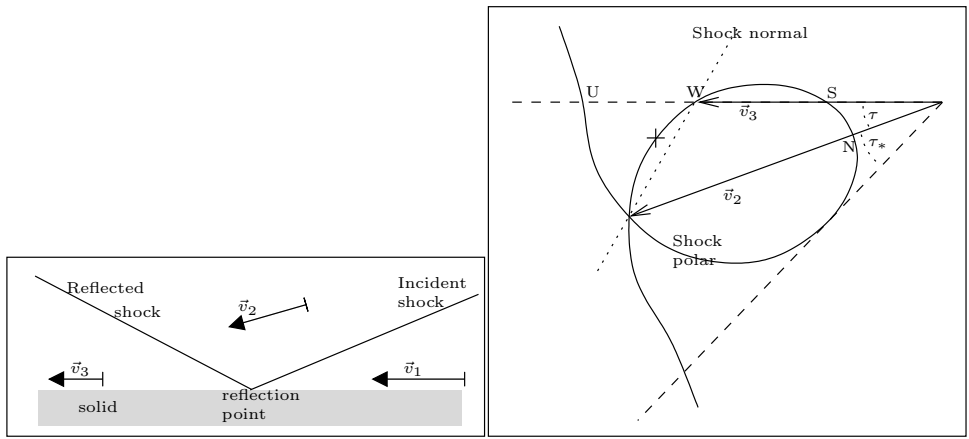

\input{locrr.pstex_t}%
\input{polar.pstex_t}%
\caption{Left: local RR. Right: fixed $\vv_2$; each steady shock produces
one $\vv_3$ on the curve (shock polar, symmetric across $\vv_2$; shock normal $\parallel\vv_2-\vv_3$). 
For $|\tau|<\tau_*$, three shocks satisfy $\tau=\measuredangle(\vv_2,\vv_3)$: strong-type (S),
weak-type (W) and expansion (U; unphysical). W are transonic right of $+$, supersonic left.}
\mylabel{fig:locrr-left}%
\mylabel{fig:spolar-right}%
\end{figure}

We temporarily drop the last requirement and consider the \defm{shock polar}: the curve of possible $\vv_3$
for various shock-wall angles. In the setting of Figure \myref{fig:spolar-right} right, 
there are exactly two\footnote{There is a third point which belongs to the
unphysical part of the shock polar, representing expansions shocks.
For other parameters (incident shocks), there may be \emph{no} reflected shock; 
in borderline cases there may be exactly one (called \defm{critical-type}). 
In such cases some type of Mach reflection should be expected.} points on the shock polar
that yield $\vv_3$ parallel to the wall. The corresponding shocks are called \defm{weak} ($W$) and \defm{strong} ($S$)
in the literature;
we prefer the terms \defm{weak-type} and \defm{strong-type}. 

While every steady shock must have a supersonic
upstream region, the downstream region may be subsonic (``transonic shock'') or supersonic (``supersonic shock'') or sonic. 
The weak-type shock can be any of these, but tends to be supersonic for the largest part of the parameter range.
The strong-type shock is always transonic.

Which of these two choices will occur? Chen and Feldman \cite{chen-feldman-selfsim-journal} constructed 
Figure \myref{fig:classrr} center left for $\theta\approx90^\circ$ with a \emph{weak-type supersonic} shock. 
\cite{elling-rrefl} obtained solutions for some $\theta\not\approx90^\circ$, but still with \emph{weak-type supersonic} shock. 

In other cases global strong-type reflections are known to exist. Consider the initial data of Figure \myref{fig:bigger90} left:
a straight shock separates two constant-state regions. If the parameters (wall angle, velocities, shock angle) are
chosen well, the strong-type reflected shock appearing for $t>0$ 
will be precisely perpendicular to the opposite wall (Figure \myref{fig:bigger90} center).

\begin{figure}[h]
\input{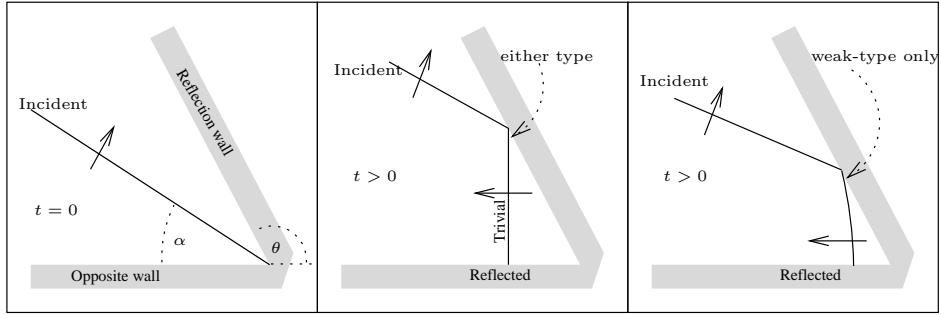}%
\caption{Left: initial data. Center: a trivial case of global transonic RR. Right: a perturbation
that exists only for \emph{weak}-type RR.}%
\mylabel{fig:bigger90}%
\end{figure}

The local RR can be extended trivially into a global RR, with straight shocks separating constant states.
In particular, a global \emph{strong-type} RR of this kind is possible. 
However, \cite{elling-detachment} proves that this pattern is \defm{structurally unstable}:
when the parameters are perturbed, non-existence of a global strong-type RR can be shown in the class of
flows that have $C^1$ reflected shocks as well as continuous density and velocity in the triangular region enclosed by reflected shock
and wall corner.
\emph{Weak}-type transonic RR, on the other hand, is structurally stable in the same setting (see Figure \myref{fig:bigger90} right),
as \cite{elling-sonic-potf} has shown.

Naturally we wonder whether the previously mentioned problems, classical RR (Figure \myref{fig:classrr})
and supersonic wedge flow (Figure \myref{fig:superwedge}) allow some strong-type global RR for the \emph{same} parameters
that allow the already known solutions. This would constitute non-uniqueness examples for 2d compressible potential flow.

Uniqueness in \emph{general} function classes is far beyond state-of-the-art techniques.
Even uniqueness in $L^\infty$ or $\BV$ of a \emph{constant} state in 2d compressible potential\footnote{In fact we
are not aware of any proposals of admissibility criteria for multi-d compressible potential flow 
that apply to \emph{general} function classes
(as opposed to the Lax condition for piecewise smooth flow).}
flow appears to be open
(it is known for Euler flow, however \cite{dafermos-uq,diperna-uq}). 
This is a particular motivation for the present article: if existence of a second solution for the same
initial data could be shown, 
the initial-boundary value problem would be ill-posed. Indeed some researchers have suggested this is the case.
While for Euler flow, various rigorous or numerical non-uniqueness examples are known 
\cite{pullin-nuq,scheffer,shnirelman-1997,shnirelman-2000,elling-nuq-journal,de-lellis-szekelyhidi,lopes-lowengrub-lopes-zheng},
the author conjectures that it is caused by the presence of nonzero vorticity $\omega:=\nabla\times\vv$ 
and that uniqueness should hold for potential flow (Euler flow with the assumption of irrotationality, $\omega\equiv 0$),
if ``weak admissible solution'' is defined correctly.


Indeed, in the present article we prove that for potential flow
neither classical regular reflection (Figure \myref{fig:classrr} center right) nor supersonic wedge (Figure \myref{fig:superwedge} right)
have a strong-type \emph{global} RR solution, if we require (a) the reflected shocks to be $C^1$ and 
(b) the region between reflected shock and wall to have continuous velocity and density.

\section{Self-similar potential flow}

\subsection{Equations}

2d isentropic Euler flow is a PDE system for a density field $\rho$ and velocity field $\vv$,
consisting of the continuity equation
\begin{alignat}{1}
  \rho_t + \nabla\cdot(\rho\vv) &= 0 \mylabel{eq:continuity}
\end{alignat}
and the momentum equations
\begin{alignat}{1}
  (\rho\vv)_t+\nabla\cdot(\rho\vv\otimes\vv)+\nabla p &= 0
\notag\end{alignat}
The pressure $p$ is a strictly increasing smooth function of $\rho$. 
The \defm{sound speed} $c$ is 
$$c=\sqrt{\frac{dp}{d\rho}(\rho)}.$$
In this paper we focus on polytropic pressure:
\begin{alignat}{1}
    p(\rho) &= \rho^\gamma
\notag\end{alignat}
for some $\gamma\in[1,\infty)$. 

If we assume irrotationality
$$\nabla\times\vv,$$
then we may take
$$\vv=\nabla\phi$$
for a scalar potential $\phi$. Assuming smooth flow, the momentum equations yield
\begin{alignat}{1}
  \rho &= \pi^{-1}(A-\phi_t-\frac12|\nabla\phi|^2) \myeqlabel{eq:rhofun}
\end{alignat}
where $A$ is a global constant and where 
\begin{alignat}{1}
    \frac{d\pi}{d\rho}=\frac{1}{\rho}\cdot\frac{dp}{d\rho}=\rho^{-1}c^2. \myeqlabel{eq:pideriv}
\end{alignat}
The remaining continuity equation \myeqref{eq:continuity} is \defm{unsteady potential flow}.

For any $t\neq0$ we may change from standard coordinates $(t,x,y)$ to \defm{similarity coordinates} $(t,\xi,\eta)$
with $\vxi=(\xi,\eta)=(x/t,y/t)$. A flow is \defm{self-similar} if $\rho,\vv$ are functions of $\xi,\eta$ alone, without
explicit dependence on $t$. 

In potential flow, self-similarity corresponds to the ansatz
$$\phi(t,x,y)=t\psi(x/t,y/t).$$
By differentiating the divergence form \eqref{eq:continuity} of potential flow and using \myeqref{eq:rhofun} and \myeqref{eq:pideriv}, 
we obtain the non-divergence form
\begin{alignat}{1}
  (c^2I-(\nabla\psi-\vxi)^2):\nabla^2\psi &= 0. \myeqlabel{eq:nondivpsi}
\end{alignat}
Here $A:B$ is the Frobenius product $\trace(A^TB)$, $\vec w^2:=\vec w\otimes\vec w=\vec w\vec w^T$ (not $\vec w^T\vec w$) 
and $\nabla^2$ is accordingly the Hessian. In coordinates:
\begin{alignat*}{1}
  (c^2-(\psi_\xi-\xi)^2)\psi_{\xi\xi}-2(\psi_\xi-\xi)(\psi_\eta-\eta)\psi_{\xi\eta}
  +(c^2-(\psi_\eta-\eta)^2)\psi_{\eta\eta} &= 0.
\end{alignat*}
It is sometimes more convenient to use the \emph{pseudo-potential}
\begin{alignat}{1}&
    \chi:=\psi-\frac12|\vxi|^2
    \myeqlabel{eq:chipot}
\end{alignat}
which yields
\begin{alignat}{1}
  (c^2I-\nabla\chi^2):\nabla^2\chi + 2c^2 - |\nabla\chi|^2 = 0. \myeqlabel{eq:nondivchi}
\end{alignat}
We choose $A=0$ (by adding a constant to $\chi$) so that
\begin{alignat}{1}
    \rho &= \pi^{-1}\big(-\chi-\frac12|\nabla\chi|^2\big) \myeqlabel{eq:pi-ss}
\end{alignat}
Self-similar potential flow is a second-order PDE of \emph{mixed type}; 
the local type is determined by the coefficient matrix $c^2I-\nabla\chi^2$
which is positive definite if and only if $L<1$, where 
$$L:=\frac{|\vec z|}{c}=\frac{|\vv-\vec x/t|}{c}$$
is called \defm{pseudo-Mach number}; for $L>1$ the equation is hyperbolic.

\subsection{Symmetries}

Potential flow, like Euler and Navier-Stokes, has important symmetries that will simplify our discussion. 
First, it is invariant under rotation. Second, \myeqref{eq:nondivchi} is clearly translation-invariant:
if $\chi(\vxi)$ is a solution, so is $\chi(\vxi-\vec w)$. 
But in contrast to the steady flow, translation-invariance is not indifference of physics to the location
of an experiment; rather, it is the much less trivial invariance under change of \emph{inertial frame}.
In $(t,x,y)$ coordinates it corresponds to a change of observer
$$\vv\leftarrow\vv-\vec w,\qquad \vxi=\vec x/t \leftarrow\vxi-\vec w,$$
where $\vec w$ is the velocity of the new observer relative to the old one. Obviously the \defm{pseudo-velocity}
$$\vec z:=\nabla\chi=\nabla\psi-\vxi =\vv-\vxi$$
does not change.

\subsection{Slip condition}

At a solid wall we impose the usual \defm{slip condition}:
\begin{alignat}{1}
    0 &= \nabla\chi\cdot\vn = \nabla\psi\cdot\vn-\vxi\cdot\vn = \vv\cdot\vn-\vxi\cdot\vn = \vec z\cdot\vn.
    \myeqlabel{eq:slip}
\end{alignat}
In the frame of reference of an observer travelling on the wall, the slip condition takes the more familiar form
\begin{alignat}{1}
    0 &= \vv\cdot\vn,
    \myeqlabel{eq:slip0}
\end{alignat}
since the observer velocity $\vxi$ must satisfy $\vxi\cdot\vn=0$.

\subsection{Shock conditions}

The weak solutions of potential flow are defined by the divergence-form continuity equation \eqref{eq:continuity}.
Its self-similar form is
$$\nabla\cdot(\rho\nabla\chi)+2\rho = 0.$$
The corresponding Rankine-Hugoniot condition on a shock is 
\begin{alignat}{1}
  \rho_uz^n_u &= \rho_dz^n_d \myeqlabel{eq:rh-z} 
\end{alignat}
where $u,d$ indicate the limits on the \defm{upstream} and \defm{downstream} side and $z^n$, $z^t$ are the normal and tangential
component of $\vec z$.
As the equation is second-order, we must additionally require continuity of the potential:
\begin{alignat}{1}
  \psi^u &= \psi^d. \myeqlabel{eq:rh-cont}
\end{alignat}
By taking a tangential derivative, we obtain
\begin{alignat}{1}
  z^t_u &= z^t_d =: z^t \myeqlabel{eq:ztan}
\end{alignat}
It is easy to verify that translation- and rotation-invariance carry over to weak solutions.

Observing that $\sigma=\vxi\cdot\vn$ is the shock speed, we obtain the more familiar form
\begin{alignat}{1}
  \rho_uv^n_u - \rho_dv^n_d &= \sigma(\rho_u-\rho_d), \myeqlabel{eq:rh-v} \\
  v^t_u &= v^t_d =: v^t. \myeqlabel{eq:vtan}
\end{alignat}

Fix the unit shock normal $\vn$ so that $z^n_u>0$ (i.e.\ $\vn$ is pointing \defm{downstream}) which implies $z^n_d>0$ as well.
To avoid expansion shocks we must require the admissibility condition 
\begin{alignat}{1}&
    z^n_u\geq z^n_d, \myeqlabel{eq:adm-z}
\end{alignat}
which is equivalent to
\begin{alignat}{1}
  v^n_u &\geq v^n_d. \myeqlabel{eq:adm-v}
\end{alignat}
We chose the unit tangent $\vt$ to be $90^\circ$ counterclockwise from $\vn$.

By \myeqref{eq:vtan} the tangential components of the velocity are continuous across the shock,
so the velocity jump is normal. 
Assuming $v^n_u>v^n_d$ (positive shock strength), we can express the downstream shock normal as 
\begin{alignat}{1}
    \vn &= \frac{\vv_u-\vv_d}{|\vv_u-\vv_d|}. \myeqlabel{eq:normal-v}
\end{alignat}

If a shock meets a wall, with continuous $\rho,\vv$ on the $u,d$ sides near the meeting point, then
$\vec z_u$ and $\vec z_d$ must be tangential to the wall, by the slip condition \myref{eq:slip}.
Since $\vec z_u-\vec z_d$ is nonzero and normal to the shock, the shock must meet the wall at a \emph{right} angle.

\color{blue}

\subsection{Shock polar}

\color{black}

In our problem the upstream regions are constant and determined. Let $\psi$ be the potential in the downstream
region, $\psi^I$ the (linear) potential upstream (ditto for $\chi$, $\rho$, $z$, $v$, ...).
We substitute \myeqref{eq:rh-cont}, \myeqref{eq:pi-ss}, \myeqref{eq:normal-v} and \myeqref{eq:chipot}
into \myeqref{eq:rh-z} to obtain the shock condition 
\begin{alignat}{1}
    g(\nabla\psi,\vxi) &= 0
    \myeqlabel{eq:gshock}
\end{alignat}
where
\begin{alignat}{1}
  g(\vv,\vxi) 
  &:= 
  \Big(\pi^{-1}\big[ -\big( \psi^I(\vxi)
  -\frac12|\vxi|^2 \big)
  -\frac12|\vv-\vxi|^2\big]
  (\vv-\vxi)
  -\rho^I(\vv^I-\vxi)\Big)
  \cdot\frac{\vv^I-\vv}{|\vv^I-\vv|}
  \notag\\&=
  \Big(\pi^{-1}\big[ -\psi^I(\vxi) + \vv\cdot\vxi - \frac12|\vv|^2 \big]
  (\vv-\vxi)
  -\rho^I(\vv^I-\vxi)\Big)
  \cdot\frac{\vv^I-\vv}{|\vv^I-\vv|}
  . \myeqlabel{eq:g}
\end{alignat}
$g$ is smooth away from $\vv=\vv^I$ (which corresponds to a vanishing shock).

For any $\vxt$ such that $\vxt-\vxi\perp\vn=(\vI-\vv)/|\vI-\vv|$ 
we have $\psi^I(\vxt)=\psi^I(\vxi)+\vI\cdot(\vxt-\vxi)$, so 
\begin{alignat}{5}
    g(\vv,\vxt) 
    =&
    \Big(\pi^{-1}\big[ -\psi^I(\vxt) + \vv\cdot\vxt - \frac12|\vv|^2 \big]
    (\vv-\vxt)
    -\rho^I(\vv^I-\vxt)\Big)
    \cdot\vn
    \notag\\=& 
    \Big(\pi^{-1}\big[ -\psi^I(\vxi) - \vI\cdot(\vxt-\vxi) + \vv\cdot(\vxt-\vxi) + \vv\cdot\vxi - \frac12|\vv|^2 \big]
    (\vv-\vxi)
    -\rho^I(\vv^I-\vxi)\Big)
    \cdot\vn 
    \notag\\&+ \pi^{-1}\big[ -\psi^I(\vxt) + \vv\cdot\vxt - \frac12|\vv|^2 \big]\subeq{(\vxi-\vxt)\cdot\vn}{=0}
    - \rho^I\subeq{(\vxi-\vxt)\cdot\vn}{=0}
    \notag\\=& 
    \Big(\pi^{-1}\big[ -\psi^I(\vxi) + \subeq{\subeq{(\vv-\vI)}{\parallel\vn}\cdot(\vxt-\vxi)}{=0} + \vv\cdot\vxi - \frac12|\vv|^2 \big]
    (\vv-\vxi)
    -\rho^I(\vv^I-\vxi)\Big)
    \cdot\vn
    =
    g(\vv,\vxi) 
    \myeqlabel{eq:tan-shift}
\end{alignat}
as well. This corresponds to the well-known physical feature that if the downstream $\vv,\rho$ and shock tangent $T$ satisfy the shock relations 
in $\vxi$ (so that $g(\vv,\vxi)=0$), then also in $\vxt$ for any $\vxt$ on the tangent $T$ through $\vxi$.

Using \myeqref{eq:pideriv} and assuming $\psi$ locally satisfies the shock conditions \myeqref{eq:gshock} and \myeqref{eq:rh-cont}
(equivalently \myeqref{eq:rh-z} and \myeqref{eq:rh-cont}), we compute the derivatives:
\begin{alignat}{1}&
    \nabla_{\vv} \frac{\vv^I-\vv}{|\vv^I-\vv|} 
    = - |\vv^I-\vv|^{-1} \big(I-(\frac{\vv^I-\vv}{|\vv^I-\vv|})^2\big)  
    = - |\vv^I-\vv|^{-1} \big(I-\vn^2\big) 
    = - |\vv^I-\vv|^{-1} \vt^2
\notag\end{alignat}
(we remind that $\vec w^2=\vec w\vec w^T$) so that
\begin{alignat}{1}
    g_{\vv} := (\frac{\partial g}{\partial v^x},\frac{\partial g}{\partial v^y})
    &= 
    \rho\Big(I-(\frac{\vv-\vxi}{c})^2\Big)\vn
    -
    \frac{\rho(\vv-\vxi)-\rho^I(\vv^I-\vxi)}{|\vv^I-\vv|}\cdot\vt^2. \myeqlabel{eq:gv}
\end{alignat}
Therefore
\begin{alignat}{1}
    g_{\vv}\cdot\vn 
    &=
    \rho\big(1-(\frac{z_n}{c})^2\big)
    > 0
    \myeqlabel{eq:gvn}
\end{alignat}
for admissible shocks (with nonzero strength),
so $g_{\vv}\neq 0$ always.
On the other hand
\begin{alignat}{1}
    g_{\vv}\cdot\vt 
    &\topref{eq:ztan}=
    \rho(-\frac{z^tz^n}{c^2})
    - \frac{\rho-\rho^I}{|\vv^I-\vv|}z^t
    = - z^t \big( \subeq{\rho\frac{z^n}{c^2}}{>0} 
    + \subeq{(\rho-\rho^I)}{>0}|\vv^I-\vv|^{-1} \big)
    \myeqlabel{eq:gvt}
\end{alignat}
so that
\begin{alignat}{1}
    \sign(g_{\vv}\cdot\vt) &= - \sign z^t.
    \myeqlabel{eq:gvtsign}
\end{alignat}

The shock polar (see Figure \myref{fig:spolar-right}) is the curve of $\vv$ obtained
by holding the shock in a fixed $\vxi$ 
and keeping the upstream state fixed while varying the normal. 
Therefore the shock polar is the curve of solutions $\vv$ of 
$$g(\vv,\vxi)=0.$$
Hence 
$$g_{\vv}(\vv,\vxi) \perp \text{shock polar}\quad\text{in $\vv$,}$$ 
by the implicit function theorem.

In Figure \myref{fig:spolar-right} right the point $N$ of the polar corresponds to a \emph{pseudo-normal} 
shock: $z^t=0$. In $N$, the normal $g_{\vv}$ points (by \myeqref{eq:gvn}) in the same direction as 
$$\vn=\frac{\vv_2-\vv_3}{|\vv_2-\vv_3|},$$
hence (for a $\vv_3$ ending in $N$ and $\vv_2$ as shown) left.
Therefore $g_{\vv}$ is an \emph{inner} normal \footnote{not necessarily unit} 
to the admissible part of the shock polar. 

In local RR the reflected shock must yield $\vv_3$ parallel to the wall. 
In Figure \myref{fig:spolar-right} right, $\vv$ for the weak-type shock 
(base in origin, tip in W) yields $\vv\cdot\vn<0$ for inner normals $\vn$ of the shock polar 
whereas $\vv$ for the strong-type shock (tip in $K$) yields $\vv\cdot\vn >0$.
A critical-type shock (see $\tau_*$ in Figure \myref{fig:spolar-right} right)
is the limit of adjacent weak and strong types, so $\vv\cdot\vn=0$. 
This motivates the following definition:
\begin{definition}
    \mylabel{def:type}%
  A shock is called \defm{weak-type} (in a particular point $\vxi$ in self-similar coordinates) if 
  \begin{alignat}{1}
    g_{\vv}\cdot\vz &< 0, \myeqlabel{eq:weaktype}
  \end{alignat}
  (where $\vz$ is still downstream), \defm{strong-type} if $>0$, \defm{critical-type} if $=0$.
\end{definition}
The definition has three pleasant properties: it coincides with the standard definition in the case 
of strictly convex polars,
it generalizes the definition of weak/strong-type to non-convex cases\footnote{In such cases, there
may be three or more reflected shocks that yield $\vv_3$ tangential to the wall.}, and finally 
the sign condition is precisely what is needed for discussing elliptic corner regularity (see \cite{elling-sonic-potf}). 

\cite[Theorem 1]{elling-sonic-potf} asserts that the (physical part of the) shock polar is strictly convex
for potential flow with polytropic pressure law, the case we consider here.

\color{black}

\section{Considerations for transonic reflected shocks of either type}

In this section we allow the reflected shock (transonic) to be any type.
We show that after a change of coordinates the minimum of $\psi$ over the elliptic region is attained in the reflection point. 
In the next section we focus on a strong-type reflected shock and obtain a contradiction by ruling out a minimum in the reflection point.

\subsection{Classical regular reflection}

Consider the possibility of a transonic (as in Figure \myref{fig:classrr} center right) global solution 
of classical regular reflection. 

Using invariance under rotation and change of observer, we may assume coordinates have been chosen 
(see Figure \myref{fig:clarr} left and right) 
so that the constant velocity $\vv^I$ on the hyperbolic side of the reflected shock is vertical down and so that
$\vv$ approaches $0$ as we approach the reflection corner through the elliptic region $E$.
Both combined, $\vv^I-\vv=\vv^I$ --- which is the shock normal, by \myeqref{eq:normal-v} --- is vertical down,
so the tangent of the reflected shock is horizontal in the reflection point.

Let $S$ be the reflected shock, $A$ the reflection wall, $B$ the opposite wall, 
$B_I$ and $A_I$ the parts above the shock while $B_E$ and $A_E$ are the segments below the shock;
all these sets are meant to \emph{exclude} endpoints.
$E$ is the elliptic region, $I$ the hyperbolic region adjacent to $B$; $E,I$ are meant to be \emph{open}. 
Let $\vxi_r$ be the reflection point.
The unit normals $\vn_B$ of $B$ and $\vn_A$ of $A$ are chosen \emph{outer} to $E$.

We assume\footnote{The notion of weak- and strong-type
loses meaning if we do not require $\vv=\nabla\psi$ to have the same limit on the shock and at the wall as we approach
the reflection corner. The question studied in this paper makes no sense if we require less than $C^1$ regularity in the corner.} 
$\psi\in C^1(\overline E)$ so that $\rho\vv\in C^0(\overline E)$; we also assume $\overline S$ (including endpoints) is $C^1$. 
($\psi$ is affine in the hyperbolic regions, yielding constant $\rho,\vv$.)
From now on, $\psi$ is always meant to be the restriction of $\psi$ to $\overline E$, with limits on $\partial E$ taken in $E$.
In particular, ``global'' extremum refers to the extremum over $\overline E$.

\if\dofigures%
\begin{figure}
\input{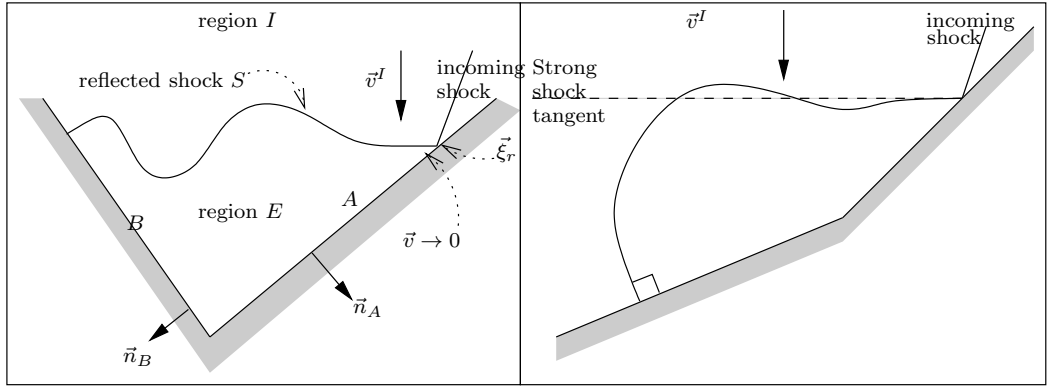}
\caption{Two cases of wall-corner-induced RR.}
\mylabel{fig:clarr}
\end{figure}
\fi%

\paragraph{Cases}

Consider the angle $\measuredangle(\vv^I,\vn_B)$ between $\vv^I$ and $\vn_B$. There are three cases: $>90^\circ$, $=90^\circ$, $<90^\circ$ (the latter includes in particular all cases of classical RR).

For $=90^\circ$, strong-type global reflection exists, as observed in the introduction.
But otherwise we can prove non-existence. Consider the $<90^\circ$ case. 

If $\psi$ was affine ($\rho,\vv$ constant), then the shock $S$ would be straight and horizontal.
But then it would meet the opposite wall $B$ at an angle $\neq 90^\circ$ or not at all --- contradiction.
Therefore $\psi$ is in particular not constant.

\paragraph{Extrema}

$\psi$ is continuous, in particular, so it must attain a global minimum in $\overline E$ which is compact. 
Assume $\psi$ does \emph{not} attain its minimum in the reflection point $\vxi_r$. 

\paragraph{Opposite wall}

On $B_I$ the slip condition \myeqref{eq:slip} implies
$$ 0 = \subeq{\vv^I\cdot\vn_B}{>0} - \vxi\cdot\vn_B \quad\Rightarrow\quad\vxi\cdot\vn_B>0.$$
$\vxi\cdot\vn_B>0$ is constant along $B$, so at $B_E$ we also have
$$ 0 \topref{eq:slip}{=} \nabla\chi\cdot\vn_B = \nabla\psi\cdot\vn_B-\subeq{\vxi\cdot\vn_B}{>0}\quad\Rightarrow\quad\nabla\psi\cdot\vn_B>0. $$
This rules out a local minimum at $\overline{B_E}$, including wall-wall and wall-shock corner. 
(In this step we see the key difference to the case $\measuredangle(\vv^I,\vn_B)=90^\circ$.)

\paragraph{Interior}

By the strong maximum principle, \myeqref{eq:nondivpsi} does not allow local $\psi$ extrema in the interior $E$,
since we have already shown $\psi$ is not constant.

\paragraph{Reflection wall}

By choice of coordinates, 
\begin{alignat}{1}
    \lim_{E\ni\vxi\rightarrow\vxr} \vv(\vxi) &= 0 \myeqlabel{eq:psixr}
\end{alignat}
The slip condition at $\vxr$ is
$$ 0 = \nabla\chi\cdot\vn_A = \nabla\psi\cdot\vn_A -\vxr\cdot\vn_A = \vv\cdot\vn_A-\vxr\cdot\vn_A 
\topref{eq:psixr}{=} -\vxr\cdot\vn_A\quad .$$
This implies that for every other $\vxi\in A $
$$ 0 = \vxi\cdot \vn_A  $$
as well. Therefore the slip condition yields
$$ 0 = \nabla\psi\cdot\vn_A - \vxi\cdot\vn_A = \nabla\psi\cdot\vn_A \qquad\text{on $A_E$.} $$
Combined with \myeqref{eq:nondivpsi} the Hopf lemma \cite[Lemma 3.4]{gilbarg-trudinger} rules out a local minimum of $\psi$ at $A_E$.

\paragraph{Shock}

Hence the global minimum of $\psi$ can only be attained in a point $\vxs \in S $ at the shock, 
away from both endpoints, and since it is not attained in the reflection corner $\vxr$, 
necessarily 
$$\psi(\vxs)<\psi(\vxr).$$
Combined with the shock condition 
$$ \psi \topref{eq:rh-cont}{=} \psi^I = \psi^I(0) + \vv^I\cdot\vxi $$
this implies $\eta_s > \eta_r$ since $\vv^I$ is vertical down (Figure \myref{fig:clarr}).
A $\psi$ minimum requires 
\begin{alignat}{1}&
    \nabla\psi\cdot\vt = 0 \qquad\text{in $\vxs$,}
\notag\end{alignat}
so that the shock is horizontal, as well as 
\begin{alignat}{1}&
    \nabla\psi\cdot\vn \geq 0 \qquad\text{in $\vxs$,}
\notag\end{alignat}
where $\vn$ is the \emph{downstream} normal, hence \emph{inner} to $E$.
Since the minimum is global, $\vxs$ is the highest point of $\overline E$, so $\vn$ points vertically down:
\begin{alignat}{1}&
    \psi_\eta(\vxs) \leq 0. \myeqlabel{eq:psie1}
\end{alignat}
The reflected shock $S$ is normal ($\vv^I\parallel\vn$) both at $\vxr$ and in $\vxs$,
but higher in $\eta_s$. Being farther upstream in $\vxi$ coordinates corresponds 
to moving faster upstream in $(t,\vec x)$ coordinates. 
A normal shock is the stronger the faster it moves upstream. 
With upstream velocity held fixed, that means the downstream velocity $\vv_d\cdot\vn$ becomes smaller
($\vn$ pointing downstream). In our context that means $\psi_\eta$ increases. 
(This argument is contained in \cite[Proposition 2.9]{elling-liu-pmeyer} whose proof provides a detailed calculation.)
Since $v^y=\psi_\eta=0$ at $\vxr$ on the $E$ side, necessarily 
$$\psi_\eta(\vxs) > 0, $$ 
in contradiction to \myeqref{eq:psie1}.

\paragraph{Conclusion}

We have ruled out a global minimum in every point of $\overline E$ other than $\vxr$, so our original assumption was wrong: 
\begin{proposition}
    For $\measuredangle(\vv^I,\vn)\neq 90^\circ$,
    $\psi$ is not constant and must attain\footnote{This is also true for $=90^\circ$, but irrelevant, and would require some proof modifications.} 
    its global minimum in $\vxr$. 
\end{proposition}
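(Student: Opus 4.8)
The plan is to argue by contradiction with an extremum principle for the elliptic equation \myeqref{eq:nondivpsi}: assuming the global minimum of $\psi$ over the compact set $\overline E$ is attained somewhere other than $\vxr$, I would rule out every candidate location in turn, so that $\vxr$ remains as the only possibility. Throughout I work in the normalized coordinates of Figure \myref{fig:clarr} ($\vv^I$ vertical down, $\vv(\vxi)\to0$ as $\vxi\to\vxr$ through $E$), and I focus on $\measuredangle(\vv^I,\vn_B)<90^\circ$, the case that contains classical RR (the complementary case $\measuredangle(\vv^I,\vn_B)>90^\circ$ being handled by the same scheme).

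First I would settle non-constancy. If $\psi$ were affine then $\rho$ and $\vv$ are constant on $E$, so by the shock relation $\psi=\psi^I$ the shock $S$ is a level set of the linear function $\psi^I$, hence a horizontal line. Since $\measuredangle(\vv^I,\vn_B)\neq90^\circ$ the wall $B$ is not vertical, so a horizontal $S$ either misses $B$ or meets it at an angle $\neq90^\circ$; both contradict the right-angle meeting condition forced by the slip and Rankine--Hugoniot relations. Thus $\psi$ is non-constant, and being continuous on the compact set $\overline E$ it attains a global minimum somewhere on $\overline E$.

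Next I would eliminate the interior and the walls, under the standing assumption that the minimum is not at $\vxr$. In the open region $E$ the strong maximum principle for \myeqref{eq:nondivpsi} forbids an interior extremum of a non-constant solution. On $B_E$ the slip condition on the hyperbolic side gives $\vxi\cdot\vn_B=\vv^I\cdot\vn_B>0$, which is constant along the straight wall $B$, so the slip condition on $B_E$ yields $\nabla\psi\cdot\vn_B=\vxi\cdot\vn_B>0$ with $\vn_B$ outer; hence $\psi$ strictly decreases inward along $B_E$, ruling out a minimum on $\overline{B_E}$ including the wall--wall and wall--shock corners (this is exactly where the hypothesis on $\measuredangle(\vv^I,\vn_B)$ is used). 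On the reflection wall the coordinate choice forces $\vxr\cdot\vn_A=0$, hence $\vxi\cdot\vn_A=0$ on all of $A$, so $\nabla\psi\cdot\vn_A=0$ on $A_E$, and the Hopf lemma \cite[Lemma 3.4]{gilbarg-trudinger} excludes a boundary minimum there. This leaves only interior points of the shock $S$ (its endpoints lie in $\overline{B_E}$ or equal $\vxr$).

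The remaining and hardest step rules out a minimum at an interior point $\vxi_s$ of $S$. A minimum forces $\nabla\psi\cdot\vt=0$, so $S$ is horizontal at $\vxi_s$, together with $\nabla\psi\cdot\vn\geq0$ for the downstream (inner) normal $\vn$, which points vertically down because $E$ lies below $S$; this gives $\psi_\eta(\vxi_s)\leq0$. On the other hand $\psi=\psi^I=\psi^I(0)+\vv^I\cdot\vxi$ on $S$ and $\psi(\vxi_s)<\psi(\vxr)$, so with $\vv^I$ vertical down one gets $\eta_s>\eta_r$. At both $\vxr$ and $\vxi_s$ the shock is pseudo-normal ($\vn\parallel\vv^I$), but at $\vxi_s$ it moves strictly faster upstream; by the monotonicity of the normal-shock relations with shock speed, carried out in \cite[Proposition 2.9]{elling-liu-pmeyer}, a stronger normal shock with the given upstream state has smaller downstream normal velocity $\vv_d\cdot\vn$, which vanishes at $\vxr$ and is therefore negative at $\vxi_s$, i.e.\ $\psi_\eta(\vxi_s)>0$ --- contradicting $\psi_\eta(\vxi_s)\leq0$. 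This eliminates the last candidate, forcing the global minimum to lie at $\vxr$. The main obstacle is precisely this shock step: one must pin down the orientation of the inner normal at $\vxi_s$ and then convert the \emph{quantitative} monotonicity of the shock relations under change of shock speed (the content of \cite[Proposition 2.9]{elling-liu-pmeyer}) into the sharp sign $\psi_\eta(\vxi_s)>0$; arranging the corners so that the Hopf lemma and the maximum principle apply (interior-ball condition) is a secondary technical point.
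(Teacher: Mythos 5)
Your argument is correct and coincides with the paper's own proof in every essential respect: the same non-constancy argument (an affine $\psi$ forces a straight horizontal shock that cannot meet $B$ at a right angle), the same elimination of interior, opposite-wall and reflection-wall minima via the strong maximum principle, the slip condition and the Hopf lemma, and the same final contradiction at an interior shock point by combining $\psi_\eta(\vxs)\leq 0$ from the minimum condition with $\psi_\eta(\vxs)>0$ from the monotonicity of the pseudo-normal shock relations in \cite[Proposition 2.9]{elling-liu-pmeyer}. There is no substantive difference from the paper's route.
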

The arguments above apply to $<90^\circ$; 
the case $>90^\circ$ is analogous, using global maxima instead of minima, with obvious modifications.

\subsection{Supersonic wedge flow}

\if\dofigures%
\begin{figure}
    \centerline{\input{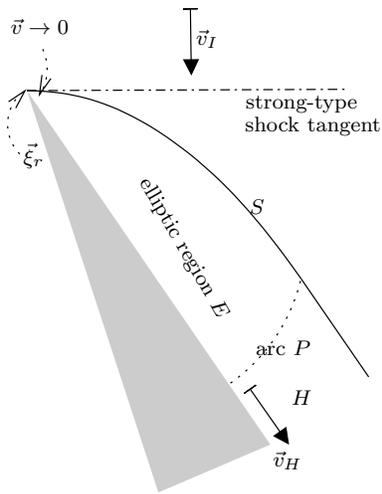}}
    \caption{Transonic reflected shock in supersonic wedge flow.}
    \mylabel{fig:super2}
\end{figure}
\fi%

The arguments for the supersonic wedge are similar. We consider (see Figure \myref{fig:super2})
a transonic reflected shock at the wedge tip $\vxr$,
with a hyperbolic upstream region with constant velocity $\vv^I$, coordinates shifted and rotated so that $\vv^I$ is vertical down 
and so that $\vv$ converges to $0$ as we approach the reflection point on the downstream side of the reflected shock.

The shock $\overline S$ is assumed to be $C^1$ (including the endpoint $\vxr$). 
Below it and adjacent to $\vxr$ is the elliptic region $E$.
$E$ is bounded by a circular arc $P$ (where \myeqref{eq:nondivpsi} becomes parabolic, with $L=1$), 
with an infinite hyperbolic region $H$ with constant velocity $\vv_H$ on the other side. 
We assume $\psi\in C^1(\overline{E\cup H})$. The portion of $S$ adjacent to $H$ is 
straight and parallel to the wedge boundary (else the initial data would be different from the case of \cite{elling-liu-pmeyer}). 

\begin{proposition}
    $\psi$ is not constant in $E$ and attains its global minimum in $\vxr$.
\end{proposition}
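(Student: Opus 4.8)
The plan is to mirror the proof of the preceding proposition, with the sonic arc $P$ (together with the unbounded hyperbolic region $H$ beyond it) playing the role that the opposite wall $B$ played there. First one establishes non-constancy: if $\psi$ were affine on $\overline E$, then $\rho,\vv$ would be constant and $\overline S$ a straight segment; but by the coordinate normalization $\vv^I-\vv=\vv^I$ is the shock normal at $\vxr$ (by \eqref{eq:normal-v}), so $S$ is horizontal there, while its portion adjacent to $H$ is parallel to the wedge boundary, which is not horizontal (the horizontal case is the degenerate flat-plate analogue of $\measuredangle(\vv^I,\vn_B)=90^\circ$, where a trivial strong solution exists and is not under consideration) --- a contradiction. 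Hence $\psi$ is not constant, so by the strong maximum principle \eqref{eq:nondivpsi} admits no interior extremum in $E$. Now assume for contradiction that $\psi$ does not attain its minimum over $\overline E$ at $\vxr$; then the minimum is attained on $\partial E\setminus\{\vxr\}$, which consists of the open reflected shock $S$, the open sonic arc $P$, possibly a portion $W$ of the wedge surface, and the corners where these meet.

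\paragraph{Wedge surface and shock.}
If $\partial E$ contains a portion $W$ of the wedge surface, then since $\vv(\vxr)=0$ the slip condition \eqref{eq:slip} at $\vxr$ gives $0=\nabla\chi\cdot\vn_W=-\vxr\cdot\vn_W$; as $\vxi\cdot\vn_W$ is constant along the straight segment $W$ (which passes through $\vxr$), it vanishes identically on $W$, whence $\nabla\psi\cdot\vn_W=\vv\cdot\vn_W=\vxi\cdot\vn_W=0$ on $W$, and the Hopf lemma \cite[Lemma 3.4]{gilbarg-trudinger} applied to \eqref{eq:nondivpsi} excludes a minimum on the open $W$ (the wall--shock and wall--arc corners are handled below). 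On $S$ the argument is verbatim the one in the previous subsection: at an interior shock minimum the tangential derivative of $\psi=\psi^I$ vanishes, forcing $S$ horizontal (since $\vv^I$ is vertical down) and forcing that point strictly above $\vxr$ in $\eta$ (since $\psi^I$ decreases in $\eta$); both there and at $\vxr$ the shock is then pseudo-normal with the same fixed upstream data, and the monotonicity of normal-shock strength from \cite[Proposition 2.9]{elling-liu-pmeyer} yields $\psi_\eta>0$ at that point, contradicting $\psi_\eta\le 0$, which holds because a global minimum point of $\psi$ is the topmost point of $\overline E$ and the downstream normal there is inner to $E$ and vertically down.

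\paragraph{The sonic arc --- the main obstacle.}
Since $\psi\in C^1(\overline{E\cup H})$ and $\psi$ is affine on $H$ with $\nabla\psi\equiv\vv_H$ (so that $\rho$ and $c\equiv c_H$ are constant there), continuity across $P$ gives $\nabla\psi=\vv_H$ on $P$, and $L=1$ on $P$ forces $P$ onto the circle $|\vxi-\vv_H|=c_H$, whose outward (from $E$) normal at $\vxi_0\in P$ is $\vn_P=(\vxi_0-\vv_H)/c_H$. By the boundary-point inequality it suffices, in order to exclude a minimum on $\overline P$ --- including both of its corners, exactly as the opposite-wall step of the previous proposition disposes of the wall--shock corner --- to prove $\nabla\psi\cdot\vn_P=\vv_H\cdot\vn_P>0$ on $P$, i.e.\ that the sub-arc $P$ lies strictly on the $\vv_H$-side of its centre $\vv_H$. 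This is exactly where the real work lies: one must invoke that $H$ is the constant downstream state of the far-field oblique shock along the straight wedge surface (so that the mutual position of the sonic circle, the tip $\vxr$, and $\vv_H$ is pinned down), and one must cope both with the degeneracy of \eqref{eq:nondivpsi} on $P$, where it is merely parabolic so that Hopf is unavailable there, and with the unboundedness of $H$, which blocks a naive maximum principle on $E\cup H$. (In the borderline case $\vv_H\cdot\vn_P=0$ one would get $\vv_H=\nabla\psi(\vxi_0)=0$, impossible since $H$ is hyperbolic, so only $\vv_H\cdot\vn_P<0$ needs to be excluded, which is again where the explicit configuration enters.) Once a minimum on $\overline{\partial E}\setminus\{\vxr\}$ has been excluded on all of $S$, $W$, $P$ and their corners, the assumed minimum must lie at $\vxr$, contradicting the assumption; together with the non-constancy established at the outset this proves the proposition.
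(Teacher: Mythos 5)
Your decomposition and your treatment of non-constancy, the interior, the wedge surface and the reflected shock all coincide with the paper's proof. The one step you do not actually carry out is the decisive one: excluding a minimum on the sonic arc. You correctly reduce it to showing $\nabla\psi\cdot\vn_{E\rightarrow H}=\vv_H\cdot\vn_{E\rightarrow H}>0$ on $\overline P$, but then announce that ``this is exactly where the real work lies'' and stop. The paper closes this with a short argument that uses no PDE machinery on $P$ or $H$ at all: since $\vv(\vxr)=0$, the slip condition \eqref{eq:slip} at $\vxr$ forces $\vxi\cdot\vn=0$ along the whole wall, so on the wall the slip condition reduces to $\vv\cdot\vn=0$ and the constant state $\vv_H$ must be parallel to the wall; moreover the portion of $S$ adjacent to $H$ is straight with normal pointing down and left, $\vv^I$ is vertical down, and admissibility together with \eqref{eq:normal-v} then force $\vv_H$ to point down and to the right, i.e.\ downstream along the wall. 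From this the positivity of $\vv_H\cdot\vn_{E\rightarrow H}$ on $\overline P$ follows, and the first-order necessary condition for a boundary minimum alone finishes the job. Consequently your worries about the parabolic degeneracy on $P$ (Hopf unavailable) and the unboundedness of $H$ (no maximum principle on $E\cup H$) are red herrings: once the sign is known, nothing beyond the first-order inequality is needed, as you yourself note at the start of that paragraph.

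Two smaller points. Your parenthetical disposal of the borderline case is a non sequitur: $\vv_H\cdot\vn_P=0$ says only that $\vv_H$ is orthogonal to $\vxi_0-\vv_H$, not that $\vv_H=0$, so that case does not eliminate itself; it is excluded by the same directional information about $\vv_H$ and the location of the arc. And your non-constancy argument (straight shock horizontal at $\vxr$ versus parallel to the non-horizontal wedge near $H$, contradicting $C^1$) is a legitimate fleshing-out of the paper's terser statement. In sum: the skeleton is correct and identical to the paper's, but the essential input --- the direction of $\vv_H$, obtained from the slip condition on the origin-containing wall and from admissibility of the far oblique shock --- is absent, so as written the proof has a genuine gap precisely at the arc $P$.
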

\begin{proof}
    If $\psi$ was affine ($(\rho,\vv)$ constant) in $E$, then the shock would be straight, but it has to become parallel
    to the downstream wall near infinity --- contradiction to $C^1$. So $\psi$ is in particular not constant.

    $\psi$ must attain a global minimum over the compact region $\overline E$. 
    Suppose it does not attain it in $\vxr$.
    
    A global minimum of $\psi$ in the interior or at the wall or shock (excluding endpoints) is 
    ruled out in the same manner as above for classical reflection (with the wall in the same role as the reflection wall $A_E$).
    Also as before we note that due to \myeqref{eq:psixr}
    we have $\vxi\cdot\vn=0$ on the wall so that
    the slip condition \myeqref{eq:slip} takes the form
    $$ 0 = \vv\cdot \vn. $$
    Therefore, the (constant) velocity $\vv_H$ in $H$ must be parallel to the wall. 
    
    At $H$, the (constant) normal of $S$ points down and left, so since the upstream velocity $\vv^I$ is vertical down 
    and --- for an admissible shock --- larger than the downstream velocity $\vv_H$, \myeqref{eq:normal-v} implies
    $\vv_H$ points down and right. Therefore 
    $$\nabla\psi\cdot\vn_{E\rightarrow H}>0 \qquad\text{on $\overline{P}$,}$$
    where $\vn_{E\rightarrow H}$ is the unit normal of $P$ \emph{outer} to $E$. Hence $\psi$ cannot attain a local minimum
    at $P$. All minimum locations in $E$ other than $\vxr$ have been ruled out.
\end{proof}

\section{Non-existence for strong-type shocks}

We have shown that minima can only be attained in the reflection point. 
Now we assume, in addition, that the reflected shock is strong-type and obtain a contradiction:
the minimum cannot be attained in the reflection corner either. 

Suppose $\psi$ does have a strict local minimum in $\vxr$ (again the case of maxima is analogous).
We will obtain a contradiction by constructing a subsolution $\suso$.

%

\paragraph{Uniform coordinates}

For convenience we may rotate and mirror-reflect,
to bring the reflection corner of either problem into the coordinates of Figure \myref{fig:corner},
with the wall $A$ emanating into positive horizontal direction from $\vxr$ and $T$ emanating into the first quadrant.
(Since we \emph{preserve} the origin of similarity coordinates $\vxi$, this does not change the values of $\psi$
which still attains a local minimum in $\vxr$.)
\if\dofigures%
\begin{figure}
\centerline{\input{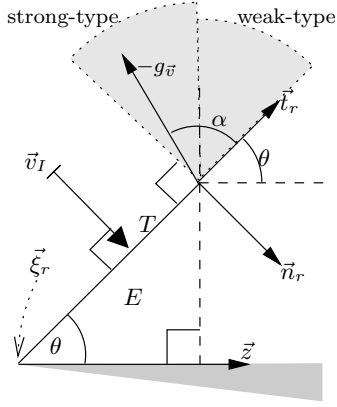}}
\caption{Near the reflection point.}
\mylabel{fig:corner}
\end{figure}
\fi%

\color{blue}

\paragraph{Boundary conditions}

\color{black}

On the shock $S$ we use the shock condition \myeqref{eq:gshock}.
Let $T$ be the shock tangent in the reflection point $\vxr$,
$\vn_r$ the downstream normal of $T$ and $\vt_r$ the corresponding tangent (counterclockwise from
$\vn_r$, by convention). 
Set
\begin{alignat}{1}
    \alpha &:= \measuredangle\big(\vt_r,-g_{\vv}(0,\vxr)\big), \myeqlabel{eq:alphadef}
\end{alignat}
where $\measuredangle(\vec a,\vec b)$ is the counterclockwise angle from $\vec a$ to $\vec b$.
By Definition \myref{def:type}, the shock is strong-type in $\vxr$ if and only if 
$$-g_{\vv}(0,\vxr)\cdot\vec z<0.$$
$\vz(\vxr)$ is tangential to the wall, 
hence horizontal, and pointing downstream, hence right:
\begin{alignat}{1}&
    z^x(\vxr) > 0, \qquad z^y(\vxr) = 0. \myeqlabel{eq:zdpoint}
\end{alignat}
Therefore $\vz_r\cdot\vt_r>0$ so that
\begin{alignat}{1}&
    -g_{\vv}(0,\vxr)\cdot\vt_r \topref{eq:gvtsign}{>} 0.
    \myeqlabel{eq:gvtsignr}
\end{alignat}
Moreover 
\begin{alignat}{1}&
    -g_{\vv}(0,\vxr)\cdot\vn_r \topref{eq:gvn}{<} 0.
    \myeqlabel{eq:gvpos}
\end{alignat}
Both combined (see Figure \myref{fig:corner}):
\begin{alignat}{1}
    \alpha+\theta &\begin{cases} 
        \in(\theta,90^\circ), & \text{shock weak-type}, \\
        =90^\circ, & \text{shock critical-type}, \\
        \in(90^\circ,90^\circ+\theta), & \text{shock strong-type}.
    \end{cases}.
    \mylabel{eq:gamma}
\end{alignat}

\color{black}

\paragraph{Dilation}

$\vv(\vxr)=0$ by choice \myeqref{eq:psixr}, so 
$$ \vxr = \vz(\vxr)-\vv(\vxr) = \vz(\vxr) \topref{eq:slip}{\parallel} \text{wall (horizontal)} ,$$
and moreover in $\vxr$ the PDE \myeqref{eq:nondivpsi} has the form
\begin{alignat}{1}
    (I-c^{-2}\vxr\vxr^T):\nabla^2\psi &\topref{eq:nondivpsi}{=} 0;   \myeqlabel{eq:psicorner}
\end{alignat}
we may change coordinates by dilating in the horizontal direction to transform the PDE to
\begin{alignat}{1} 
    \Delta\psi &= 0.
\notag\end{alignat}
The wall boundary condition remains $\psi_\eta=0$, and while $g_{\vv}(0,\vxr)$, $\alpha$, $\theta$
may change to some $\tilde g_{\vv}$, $\tilde\alpha$, $\tilde\theta$, the property
\begin{alignat}{1}&
    90^\circ < \tilde\alpha+\tilde\theta < 180^\circ \myeqlabel{eq:tilde-at}
\end{alignat}
(compare \myeqref{eq:gamma}) is \emph{preserved} by the dilation (see Figure \myref{fig:corner}).

\paragraph{Subsolution}

Now change to polar coordinates $(r,\phi)$ centered in the reflection point $\vxr$.
We let $\phi=0^\circ$ represent the wall while $\phi=\theta$ represents [the image under dilation of] $T$.

We seek $\suso$ in the form
\begin{alignat}{1}
    \suso(r,\phi)
    &=
    \psi^I(\vxr) + \epsilon r \cos(\beta\phi) 
    \myeqlabel{eq:suso}
\end{alignat}
where $\epsilon\in(0,1)$ will be small while $\beta\in(0,1)$ will be taken close to $1$. 
\begin{alignat}{1}
    \suso_r &= \epsilon\cos(\beta\phi),\qquad r^{-1}\suso_\phi = - \epsilon\beta \sin(\beta\phi),
\notag\end{alignat}
so
\begin{alignat}{1}
    |\nabla\suso| &= O(\epsilon) \qquad\text{as $r\downarrow 0$;}
    \myeqlabel{eq:Dsuso}
\end{alignat}
moreover on the wall $\phi=0^\circ$ the slip condition 
\begin{alignat}{1}
    0 &= \nabla\suso\cdot\vn=-r^{-1}\suso_\phi = \epsilon\beta \sin(\beta\phi) 
    \myeqlabel{eq:slipPsi}
\end{alignat}
is already satisfied.
\begin{alignat}{1}
    |\nabla^2\suso| &= O(\epsilon r^{-1}) \qquad\text{as $r\downarrow 0$.}
\notag\end{alignat}
In the interior near $\vxr$, 
\begin{alignat}{1}
     -\Delta\suso 
     &=
     -\suso_{rr}-r^{-1}\suso_r-r^{-2}\suso_{\phi\phi} 
     = \epsilon r^{-1}(\subeq{\beta^2}{<1}-1)\subeq{\cos(\beta\phi)}{>0}
     \leq -\delta\epsilon r^{-1}
     \myeqlabel{eq:intdel}
\end{alignat}
for some $\delta=\delta(\beta)>0$ independent of $\epsilon,\vxi$,
since $\beta\in(0,1)$ and $\theta\in(0^\circ,90^\circ)$ imply $\beta\phi\in
\subset(0^\circ,90^\circ)$.
On the reflection point shock tangent $T$,
\begin{alignat}{1}
    -\frac{\tilde g_{\vv}}{|\tilde g_{\vv}|}\cdot\nabla\suso 
    &= \suso_r\cos\tilde\alpha + r^{-1}\suso_\phi\sin\tilde\alpha 
    \notag\\&= \epsilon\big(\cos(\beta\tilde\theta)\cos\tilde\alpha-\beta\sin(\beta\tilde\theta)\sin\tilde\alpha\big)
    \notag\\& = \epsilon\big(\subeq{(1-\beta)}{\approx 0}\cos\tilde\alpha\cos(\beta\tilde\theta)
    +\beta\subeq{\cos(\tilde\alpha+\beta\tilde\theta)}{<0}\big)
    \leq -\epsilon\delta 
\notag\end{alignat}
for some $\delta>0$ independent of $\epsilon,\vxi$ if we choose $\beta<1$ sufficiently close to $1$, because 
\myeqref{eq:tilde-at} implies 
\begin{alignat}{1}&
    90^\circ < \tilde\alpha + \beta\tilde\theta < 180^\circ \qquad\text{for $\beta\approx 1$}
\notag\end{alignat}
We obtain
\begin{alignat}{1}
    -\tilde g_{\vv}\cdot\nabla\suso 
    &\leq -\epsilon\delta 
    \myeqlabel{eq:shockg}
\end{alignat}
for a modified $\delta>0$. Finally, note that
\begin{alignat*}{5}
    \psi_r(0,0) - \suso_r(0,0) &\toprefb{eq:psixr}{eq:suso}{=} 0 - \epsilon\cos(\beta\cdot 0) = -\epsilon < 0 
\end{alignat*}
so that $r\mapsto\psi(r,0)-\suso(r,0)$ is strictly decreasing in $r=0$; hence
\begin{alignat}{5}
    \text{$\psi-\suso$ does \emph{not} attain a local minimum in $\vxr$.}
    \myeqlabel{eq:psi-suso-min}
\end{alignat}

\color{blue}

\paragraph{Undilated coordinates}

\color{black}

We return to undilated coordinates.
Change the definition of $r$ to the comparable $r:=|\vxi-\vxr|$ from now on.
Then 
\begin{alignat}{1}
    -g_{\vv}(0,\vxr)\cdot\nabla\suso(\vxi)
    &\topref{eq:shockg}{\leq}
    -\delta\epsilon
    \myeqlabel{eq:gvsuso}
\end{alignat}
and 
\begin{alignat}{1}
    -[I-c(\vxr)^{-2}\nabla\chi(\vxr)^2]:\nabla^2\suso(\vxi)
    &\topref{eq:intdel}{\leq} -\delta\epsilon r^{-1}
    \myeqlabel{eq:Asuso}
\end{alignat}
for some other $\delta>0$ independent of $\epsilon,\vxi$. 
\begin{alignat}{1}
    0 = \nabla\suso\cdot\vec n \qquad\text{on the wall}
    \myeqlabel{eq:unPsi}
\end{alignat}
is unchanged (from \myeqref{eq:slipPsi}) since the dilation was in the wall direction.

We focus on a small ball $B_R(\vxr)$ with radius $R>0$ centered in $\vxr$. 
Since $\psi-\suso$ is continuous on the compact set $\overline E\cap\overline B_R(\vxr)$, it must attain
a minimum on that set. We have already excluded a local minimum in $\vxr$ in
\myeqref{eq:psi-suso-min}. On the other hand $\psi(\vxr)-\suso(\vxr)=0$, so that the actual minimum must be \emph{negative}. 
The rest of the compact set is covered by $\partial B_R(\vxr)\cap\overline E$ (arc), $S\cap B_R(\vxr)$ (shock), $A\cap B_R(\vxr)$ (wall), 
and $E\cap B_R(\vxr)$ (interior). 

%

\color{black}

\paragraph{Interior} Using $\psi\in C^1(\overline E)$ and therefore $\nabla\chi,c\in C^0(\overline E)$,
we have for $\vxi\in B_R(\vxr)\cap E$ that
\begin{alignat}{1}
    -[I-c(\vxi)^{-2}\nabla\chi(\vxi)^2]:\nabla^2\suso(\vxi)
    &=
    -[I-c(\vxr)^{-2}\nabla\chi(\vxr)^2+o_r(1)]:\subeq{\nabla^2\suso}{=O(\epsilon r^{-1})}
    \notag\\
    &\topref{eq:Asuso}{\leq}
    (-\delta r^{-1} + o(r^{-1}))\epsilon 
    <
    0.
    \myeqlabel{eq:intvar}
\end{alignat}
if $R>r>0$ is sufficiently small. 
Hence 
\begin{alignat}{1}
    -[I-c^{-2}\nabla\chi^2]:\nabla^2(\psi-\suso) &\topref{eq:nondivpsi}{>} 0
    \qquad\text{on $B_R(\vxr)\cap E$}
    \myeqlabel{eq:diffint}
\end{alignat}
so that $\psi-\suso$ cannot have a local minimum in $E\cap B_R(\vxr)$, by the weak maximum principle.

\paragraph{At the wall} The Hopf lemma \cite[Lemma 3.4]{gilbarg-trudinger}, using \myeqref{eq:diffint} and $\nabla(\psi-\suso)\cdot\vn=0$ 
(by \myeqref{eq:unPsi}), rules out a local minimum at the wall.

\color{blue}

\paragraph{At the shock} 

\color{black}


Now assume $\psi-\suso$ has a \emph{negative} minimum 
in $\vxs\in S\cap B_R(\vxr)$. Then 
\begin{alignat}{1}
    & \psi-\suso < 0 \qquad\text{and} \myeqlabel{eq:diff} \\
    & \nabla(\psi-\suso)\cdot\vt = 0 \qquad\text{and} \myeqlabel{eq:difft} \\
    & \nabla(\psi-\suso)\cdot\vn \geq 0 \qquad\text{in $\vxs$} \myeqlabel{eq:diffn} 
\end{alignat}
(again $\vn$ is the \emph{downstream} normal, hence \emph{inner}). 

Let $\vxt$ be the closest point on $T$ to $\vxs$. Then
\begin{alignat*}{5}
    \psi(\vxs)
    \topref{eq:rh-cont}{=}
    \psi^I(\vxs)
    =
    \psi^I(\vxt) + \vI\cdot(\vxs-\vxt)
    =
    \psi^I(\vxr) + \subeq{\vI\cdot(\vxt-\vxr)}{=0} + \subeq{\vI}{\neq 0}\cdot(\vxs-\vxt)
\end{alignat*}
since $\vI\perp T\parallel\vxt-\vxr$ and $\vxr,\vxt\in T$. 
On the other hand
\begin{alignat*}{5}
    \psi(\vxs) 
    \topref{eq:diff}{<}
    \suso(\vxs)
    \topref{eq:suso}{=}
    \psi^I(\vxr) + O(\epsilon R)
\end{alignat*}
Combining both and using that $\vxt-\vxi\parallel\vI$ we obtain 
\begin{alignat}{5}
    |\vxt-\vxs| &= 
    O(\epsilon R)
    \label{eq:vxt-vxs-distance}
\end{alignat}
Moreover \myeqref{eq:difft} and \myeqref{eq:diffn} yield
\begin{alignat}{5}
    \nabla\psi(\vxs) &= a\vn(\vxs) + \nabla\suso(\vxs) \quad\text{where}\quad a\geq 0 \quad.
    \myeqlabel{eq:c}
\end{alignat}
\myeqref{eq:gvn} yields
\begin{alignat}{5}
    \gv(\nabla\psi(\vxs),\vxs) \cdot \vn(\vxs) &\geq C > 0 \myeqlabel{eq:CC}
\end{alignat}
where $C$ does not depend on $R,\epsilon$. 
\begin{alignat*}{5}
    \nabla\psi(\vxs) &= \subeq{\nabla\psi(\vxr)}{\topref{eq:psixr}{=}0} + o_r(1) \quad, 
\end{alignat*}
so we obtain for $t\in[0,1]$ and $\vxt$ as above that
\begin{alignat}{5}
    \gv(t\nabla\psi(\vxs),\vxt) \cdot \vn(\vxs)
    &=
    \big[\gv(\nabla\psi(\vxs),\vxs) + (1-t)O(|\nabla\psi(\vxs)|) + O(|\vxs-\vxt|)\big] \cdot \vn(\vxs)
    \notag\\&=
    \big[\gv(\nabla\psi(\vxs),\vxs) + o_R(1) + O(\epsilon R)\big] \cdot \vn(\vxs)
    \topref{eq:CC}{\geq} C + o_R(1)
    \geq 0
    \myeqlabel{eq:gv-ns}
\end{alignat}
for $R>0$ sufficiently small, \emph{not} depending on $\epsilon$. 
Moreover, \myeqref{eq:gvsuso} yields 
\begin{alignat}{5}
    \gv(t\nabla\psi(\vxs),\vxt) \cdot \nabla\suso(\vxs)
    &=
    \big[ \gv(0,\vxr) + \subeq{O(|t\nabla\psi(\vxs)|)}{=o_R(1)} + \subeq{O(|\vxt-\vxr|)}{=O(R)} \big] \cdot \subeq{\nabla\suso(\vxs)}{\topref{eq:Dsuso}{=}O(\epsilon)}
    \notag\\&
    \topref{eq:gvsuso}{\geq}
    (\delta + o_R(1)) \epsilon
    \geq
    \frac{\delta}{2} \epsilon
    \myeqlabel{eq:gv-dsuso}
\end{alignat}
if $R$ is sufficiently small, again \emph{not} depending on $\epsilon$. Finally, 
\begin{alignat*}{5}
    0 &= g(\nabla\psi(\vxs),\vxs) 
    \topref{eq:vxt-vxs-distance}{=} g(\nabla\psi(\vxs),\vxt) + O(\epsilon R)
    \\&= \subeq{ g(0,\vxt) }{ \topref{eq:tan-shift}{=} g(0,\vxr) = 0 } + \int_0^1 g_{\vv}(t\nabla\psi(\vxs),\vxt) dt \cdot \nabla\psi(\vxs) + O(\epsilon R)
    \\&\topref{eq:c}{=} \subeq{a}{\geq 0} \int_0^1 \subeq{ g_{\vv}(t\nabla\psi(\vxs),\vxt) \cdot \vn(\vxs)}{\topref{eq:gv-ns}{\geq} 0} dt 
    + \int_0^1 \subeq{g_{\vv}(t\nabla\psi(\vxs),\vxt) \cdot \nabla\suso(\vxs)}{\topref{eq:gv-dsuso}{\geq}\frac12\delta\epsilon} dt
    + O(\epsilon R)
    \\&\geq 
    \big(\frac12 \delta+O(R)\big)\epsilon 
    > 0
\end{alignat*}
if $R$ is sufficiently small, depending on $\delta$ but \emph{not} on $\epsilon$. 

We have a contradiction. Hence $\psi-\suso$ cannot have a negative minimum at $\vxs\in S\cap B_r(\vxr)$.

\paragraph{Arc}

$\psi$ has a strict local minimum in $\vxr$, so $\psi(\vxi) > \psi(\vxr) = \psi^I(\vxr)$ for every $\vxi\in\partial B_R(\vxr)\cap\overline E$.
By continuity of $\psi$ and compactness of $\partial B_R(\vxr)\cap\overline E$, 
\begin{alignat*}{5}
    \min_{\partial B_R(\vxr)\cap\overline E} (\psi-\psi^I(\vxr))
    &> 
    0
\end{alignat*}
On the other hand
\begin{alignat*}{5}
    \max_{\partial B_R(\vxr)\cap\overline E} (\suso-\psi^I(\vxr))
    \topref{eq:suso}{=}
    O(\epsilon)
\end{alignat*}
so that
\begin{alignat*}{5}
    \min_{\partial B_R(\vxr)\cap\overline E} (\psi-\suso) &> 0 
\end{alignat*}
if $\epsilon$ is sufficiently small. 

Since $\psi-\suso=0$ in $\vxr$, this means $\psi-\suso$ does not attain its minimum with respect to $\overline B_R(\vxr)\cap\overline E$
on $\partial B_R(\vxr)\cap\overline E$. 
Since all possible locations for minima have been ruled out, we obtain a contradiction.

Hence our assumption was wrong; $\psi$ cannot attain its $\overline E$ minimum in $\vxr$ either. 

\color{black}

\section{Conclusion}

Combining the results from the previous two sections, we have shown that the $\overline E$-minimum of $\psi$
cannot be attained in any point of $\overline E$, a contradiction to continuity of $\psi$ and compactness of $\overline E$.
Hence \emph{global} regular reflections cannot be strong-type in the cases considered. 

It is natural to wonder what else the global flow may be in each case. In some cases
existence of transonic or supersonic weak-type regular reflection has been proven
\cite{chen-feldman-selfsim-journal,elling-rrefl,elling-sonic-potf}. In other cases, numerical calculations
(see \cite[Figure 6]{elling-detachment}) suggest that Mach reflections should arise instead,
or even a succession of a weak-type regular reflection followed by an additional Mach reflection.

\end{document}